\newtheorem{theorem}{Theorem}[section] 
\newtheorem{lemma}[theorem]{Lemma}     
\newcommand{\bigO}{\mathcal{O}}
\newcommand{\bigL}{\mathcal{L}}
\title[On the counting function of semiprimes]
 {On the counting function of semiprimes} 
\author{Drago\cb{s} Cri\cb{s}an $\;$and$\;$ Radek Erban}
\begin{document}
\maketitle

\begin{abstract}
A semiprime is a natural number which can be written as the
product of two primes. The asymptotic behaviour of the
function $\pi_2(x)$, the number of semiprimes less than or
equal to~$x$, is studied. Using a combinatorial argument,
asymptotic series of $\pi_2(x)$ is determined, with all 
the terms explicitly given. An algorithm for the calculation 
of the constants involved in the asymptotic series  
is presented and the constants are computed to
20 significant digits. The errors of the partial sums of the
asymptotic series are investigated. A generalization of
this approach to products of $k$ primes, for $k\geq 3$, is also
proposed.
\end{abstract}

\section{Introduction}

\noindent
For a positive integer $k$ and a positive integer (or real number)
$x$, let $\pi_k(x)$  be the number of integers less than or
equal $x$ which can be written as the product of $k$ prime 
factors. The behaviour of $\pi_k(x)$ has been extensively studied 
during last two centuries, with the main focus on the case $k=1$,
where $\pi_1(x)$ is the prime counting function, denoted 
$\pi(x)$ in the rest of this paper. The prime number theorem 
states that $\pi(x) \sim \text{li}(x)$, where the logarithmic 
integral function 
$\mbox{li}(x)
=
\int_{0}^{x}
\log^{-1} t \; \mbox{d} t
$ 
can be written as an asymptotic expansion 
$
\mbox{li} (x)
\sim 
{\frac {x}{\log x}}
\sum _{n=0}^{\infty }{\frac {n!}{(\log x)^{n}}}
$. 
Bounds on the error term have been established in the literature,
including the recent work of Trudgian~\cite{Trudgian:2016:AET}, 
who proved that, for sufficiently large $x$,
\begin{equation*}
\big| \pi(x) - \mbox{li}(x) \big|
\, \leq  \, 0.2795 \,
\frac{x}{(\log x)^{3/4}} 
\, \exp\left( -\sqrt{\frac{\log x}{6.455}} \right).
\end{equation*}
This implies the existence of constants $d_1$ and $d_2$ such that
\begin{equation}
\label{ineq_noRH}
\big| \pi(x) - \mbox{li}(x) \big|
\, \leq \, d_1 \,
\frac{x}{(\log x)^{3/4}} 
\, \exp\left( - d_2 \sqrt{\log x} \right),
\qquad
\mbox{for all} \; x\geq 2.
\end{equation}
Assuming the Riemann hypothesis, Rosser and
Schoenfeld~\cite{Rosser:1975:SBC,Schoenfeld:1976:SBC} established
even sharper bounds on the error term, including
\begin{equation*}
|\pi(x)-\text{li}(x)| < \frac{\sqrt{x} \log x}{8\pi}
\end{equation*}
for large enough $x$. Other explicit estimates of $\pi(x)$, 
in terms of $x$ and $\log x$ are achievable, as proved by
Axler~\cite{Axler:2006:NBP}.

In this paper, we focus on the case $k=2$, where the numbers 
written as products of two (not necessarily distinct) primes
are called semiprimes. In this case, Ishmukhametov and
Sharifullina~\cite{Ishmukhametov:2014:DSN} recently 
used probabilistic arguments to approximate the behaviour 
of $\pi_2(x)$ as
\begin{equation}
\pi_2(x) 
\approx 
\frac{x \log (\log x)}{\log x} 
+ 
\, 0.265 \, \frac{x}{\log x} 
- 
\, 1.540 \, \frac{x}{(\log x)^2} \, .
\label{ishsha}
\end{equation}
The first term of~(\ref{ishsha}) has already been known to 
Landau~\cite[\S{}56]{Landau:1909:HLV}, with his 
result stated, for general $k \in {\mathbb N}$, as
\begin{equation}
\pi_k(x) 
\sim
\frac{1}{(k-1)!} \, \frac{x \, (\log (\log x))^{k-1}}{\log x} \, .
\label{landaupoi}
\end{equation}
Delange~\cite[Theorem 1]{Delange:1971:FAS} obtained the asymptotic
expansion of $\pi_k(x)$ in the form
\begin{equation}
\pi_k(x) 
\sim 
\frac{x}{\log x}
\,
\sum_{n=0}^{\infty}
\frac{P_{n,k} (\log (\log x))}{(\log x)^n} \, ,
\label{delange}
\end{equation}
where $P_{n,k}$ are polynomials of degree $k-1$, with the
leading coefficient equal to $n!/(k-1)! \, .$ 
Tenenbaum~\cite{Tenenbaum:2015:IAP} proved a similar 
result, giving an expression for the coefficients in the 
polynomial $P_{0,k}$ in terms of the derivatives of 
$
\frac{1}{\Gamma(z+1)} 
\prod_{p} \big( 1 + \frac{z}{p-1} \big) 
\big( 1 - \frac{1}{p} \big)^{\! z}
$ 
evaluated at $z=0$.
Considering $k=2$ in (\ref{delange}), we can write 
an asymptotic expansion for $\pi_2(x)$ as
\begin{equation}
\pi_2 (x) 
\sim
\sum_{n=1}^{\infty} (n-1)! \,
\frac{x \log(\log x)}{(\log x)^n} 
+ 
\sum_{n=1}^{\infty} C_{n-1} \, \frac{x}{(\log x)^n} \, .
\label{aspi2}
\end{equation}
In Theorem~\ref{theoremM}, we prove that $C_0 = M$, 
where $M = 0.261497...$ is the Meissel--Mertens constant 
defined by
\begin{equation}
M 
= 
\lim_{x\rightarrow \infty} 
\Bigg( 
\sum_{p\leq x} \, \frac{1}{p} 
\, - \, \log (\log x) 
\Bigg),
\label{defMMcon}
\end{equation}
where we sum over all primes such that $p \le x.$ 
In Section~\ref{sectionCkcomp},
we calculate the rest of constants $C_n$ 
appearing in equation~(\ref{aspi2}).
They are given in Table~\ref{tablecbk} and obtained 
by the formula
\begin{equation}
C_n 
= 
n! \; \Bigg( 
\sum_{i=0}^{n} \frac{B_i}{i!} - \sum_{i=1}^{n} \frac{1}{i} 
\Bigg)
=
n! \; \Bigg( 
\sum_{i=0}^{n} \frac{B_i}{i!} - H_i
\Bigg),
\label{constCk}
\end{equation}
where $H_i$ is the $i$-th harmonic number, $B_0 = M$ and 
constants $B_i$ are defined using the asymptotic behaviour 
of sums~\cite[\S{}56]{Landau:1909:HLV}
\begin{equation}
\sum_{p\leq x} \frac{(\log p)^i}{p} 
= 
\frac{(\log x)^i}{i} + B_i + \bigO
\! \left( e^{-\sqrt[14]{\log x}} \right),
\qquad
\mbox{for} \; i \in {\mathbb N}.
\label{landaulog}
\end{equation}
Constants $B_i$ are given as limits~(\ref{constBklimit}) in
Section~\ref{sectionCkcomp}, where we present an algorithm
to efficiently calculate them to a desired accuracy. They are 
computed in Table~\ref{tablecbk} to 20 significant digits.
Rosser and Schoenfeld~\cite{Rosser:1962:AFS} prove that 
the error term in (\ref{landaulog}) can be given explicitly 
in terms of an integral, which contains the error terms in 
the prime number theorem. For the case $i=1$ in
equation~(\ref{landaulog}), explicit estimates of this 
sum and, in particular, of the constant $B_1$ involved, 
were recently obtained by Dusart~\cite{Dusart:2018:EES}.

A related arithmetic function, $\Omega(m)$, is defined to 
be the number of prime divisors of $m \in {\mathbb N}$, 
where prime divisors are counted with their multiplicity.
Considering fixed $x$ in equation~(\ref{landaupoi}), we 
can view this approximation of $\pi_k(x)/x$ as the 
probability mass function of the Poisson distribution
with mean $\log(\log(x)).$ Erd\H{o}s and Kac~\cite{Erdos:1940:GLE}
showed that the distribution of $\Omega(x)$ is Gaussian with 
mean $\log(\log(x))$ (see also R\'enyi and
Tur\'an~\cite{Renyi:1958:TEK} and Harper \cite{Harper:2008:TNP} for
generalizations and better bounds). Diaconis~\cite{Diaconis:1976:AEM}
obtained the asymptotic expansions for the average number of 
prime divisors as (see also Finch~\cite[Section 1.4.3]{Finch:2018:MC2})
\begin{equation}
\frac{1}{x} \sum_{m \le x}
\Omega(m)
\, \sim \,
\log (\log x)
+
1.0346538818\dots
+
\sum_{n=1}^\infty
\left(
- 1
+
\sum_{i=0}^{n-1}
\frac{\gamma_i}{i!}
\right)
\frac{(n-1)!}{\log^n x} \, ,
\label{defOmegaLarge}
\end{equation}
where the constants $\gamma_i$ are the Stieltjes constants,
numerically computed in \cite{Bohman:1988:SFD} to 20 
significant digits. An asymptotic series for the 
variance of $\Omega$ have also been 
obtained~\cite[Section 1.4.3]{Finch:2018:MC2}. The Stieltjes
constants $\gamma_i$ are used in Section~\ref{sectionCkcomp}
during our calculation of the values of constants $B_n$ and 
$C_n$, for $n \in {\mathbb N}.$

This paper is organized as follows. Section \ref{k=2} begins
with a counting lemma for expressing the semiprime counting
function $\pi_2$ in terms of the prime counting function $\pi$.
Using this lemma, the main results on the asymptotic 
behaviour of $\pi_2$ are stated and proved in Section~\ref{k=2} 
as Theorem~\ref{theoremM} and Theorem~\ref{theoremgen}.
While Theorem~\ref{theoremM} only gives the first two terms, 
its proof is more coincise than the proof 
of Theorem~\ref{theoremgen}, which 
gives the full asymptotic series of $\pi_2$. The constants 
$C_n$ which appear in this asymptotic series are computed in 
Section~\ref{sectionCkcomp}, where we present an efficient approach to calculate both constants $B_n$ and $C_n$, 
based on the differentiation of the prime zeta function. In Section~\ref{sectionerrorterms}, we
investigate the behaviour of the error terms given by 
the partial sums of the asymptotic series of $\pi_2$. We conclude
with a generalization of the counting argument in 
Section~\ref{secdiscussion}, discussing the extensions of the
presented results to the general case of counting
functions $\pi_k$ for $k\geq 3$. 

\section{Asymptotic behaviour of the counting function
of semiprimes}
\label{k=2}

\noindent 
As in equation (\ref{defMMcon}), we denote primes by 
$p$ and the sums over $p$ shall be understood as sums over
all primes satisfying the given condition.
In the case of summing over primes twice, we 
denote the corresponding prime summation indices 
by $p_1$ and $p_2$. 
We begin with a simple counting
formula~\cite{Ishmukhametov:2014:DSN},
that gives a way of computing $\pi_2 (x)$.

\begin{lemma}
\label{counting_pi2}
For a positive integer $x$, the following holds
\begin{equation}
\pi_2(x) 
= 
\frac{\pi(\sqrt{x}) - \pi(\sqrt{x})^2}{2}
\,
+
\sum_{p\leq \sqrt{x}}
\pi \! \left(\frac{x}{p}\right). 
\label{countformpi2}
\end{equation}
\end{lemma}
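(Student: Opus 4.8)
The plan is to count semiprimes directly as unordered pairs of primes whose product is at most~$x$. By the fundamental theorem of arithmetic, every semiprime $n \leq x$ admits a unique factorization $n = p_1 p_2$ with $p_1 \leq p_2$, so $\pi_2(x)$ equals the number of pairs $(p_1,p_2)$ of primes satisfying $p_1 \leq p_2$ and $p_1 p_2 \leq x$. First I would partition this set of pairs according to the value of the smaller prime $p = p_1$.

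For a fixed prime $p$, the admissible larger primes $p_2$ are exactly those with $p \leq p_2 \leq x/p$; such $p_2$ exist only when $p \leq x/p$, that is, when $p \leq \sqrt{x}$, which accounts for the range of summation. The number of primes in the interval $[p, x/p]$ is $\pi(x/p) - \pi(p) + 1$, where the $+1$ includes the endpoint $p_2 = p$ (the perfect-square semiprime $p^2$). Summing over all $p \leq \sqrt{x}$ would then give
\[
\pi_2(x) = \sum_{p \leq \sqrt{x}} \pi\!\left(\frac{x}{p}\right) - \sum_{p \leq \sqrt{x}} \pi(p) + \pi(\sqrt{x}).
\]

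The remaining step is to evaluate $\sum_{p \leq \sqrt{x}} \pi(p)$. Enumerating the primes up to $\sqrt{x}$ as $p_1 < p_2 < \cdots < p_m$ with $m = \pi(\sqrt{x})$, and using $\pi(p_j) = j$, this sum collapses to the triangular number $\sum_{j=1}^{m} j = m(m+1)/2$. Substituting and simplifying the remaining terms via $-m(m+1)/2 + m = (m - m^2)/2$ yields exactly the identity~(\ref{countformpi2}).

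The argument is elementary, so there is no deep obstacle; the only points that demand care are the treatment of the diagonal case $p_1 = p_2$ (ensuring the endpoint is counted once and only once, which fixes the $+1$ in the interval count) and the correct inclusive/exclusive handling of endpoints when rewriting ``primes in $[p, x/p]$'' as a difference of values of~$\pi$. The key simplifying observation is that $\sum_{p \leq \sqrt{x}} \pi(p)$ is a triangular number, which is what produces the closed-form quadratic expression in $\pi(\sqrt{x})$.
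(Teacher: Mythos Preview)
Your proof is correct and follows essentially the same route as the paper: both start from $\pi_2(x)=\sum_{p_1\le\sqrt{x}}\bigl(\pi(x/p_1)-\pi(p_1)+1\bigr)$ and then collapse the non-$\pi(x/p)$ part using that $\pi(p_j)=j$ gives a triangular number. The paper phrases the last step slightly differently (as summing the integers up to $\pi(\sqrt{x})-1$), but the computation is identical.
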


\begin{proof} \noindent
By the definition of counting functions
$\pi_2$ and $\pi$, we have
$$
\pi_2(x) 
= 
\sum_{\substack{p_1\leq p_2\\ p_1 p_2 \leq x}} 
\! \! 1
\;
= 
\sum_{p_1 \leq \sqrt{x}} 
\,
\sum_{p_1 \leq p_2 \leq \frac{x}{p_1}} 
\! \! 
1
\;
= 
\sum_{p_1 \leq \sqrt{x}} 
\left( 
\pi \! \left( \frac{x}{p_1} \right) - \pi(p_1) + 1  
\right)
$$
and formula~(\ref{countformpi2}) follows by renaming
$p_1$ to $p$ in the first term and observing that the rest
of the right hand side is the sum of 
all natural numbers from 1 up to $\pi(\sqrt{x})-1.$
\end{proof}

\noindent
Formula~(\ref{countformpi2}) gives an expression of $\pi_2(x)$
in terms of the prime counting function $\pi(x),$ which
can be approximated using the prime number
theorem~\cite{Tenenbaum:2015:IAP} as
\begin{equation}
\pi(x) 
= 
\alpha_n (x) 
+ 
\bigO \! \left( \frac{x}{(\log x)^{n+1}} \right),
\label{PNT}
\end{equation}
where $n \in {\mathbb N}$ and
\begin{equation}
\alpha_n(x)
=
\frac{x}{\log x}
\left( 
\,
\sum_{i=0}^{n-1} 
\frac{i!}{(\log x)^i}
\right).
\label{notationalphak}
\end{equation}
Using Landau~\cite[\S{}56]{Landau:1909:HLV}, we 
can rewrite equation~(\ref{defMMcon}) for any
integer $n \in {\mathbb N}$ as
\begin{equation}
\sum_{p\leq \sqrt{x}} \frac{1}{p}
=
\log (\log x) - \log 2 + M + 
o\left( \frac{1}{(\log x)^n}\right),
\label{sqrtMform}
\end{equation}
where we use the little $o$ asymptotic
notation~\cite{deBruijn:1958:AMA}, as opposed
to the big $\bigO$ asymptotic notation used in 
the prime number theorem~(\ref{PNT}). First we use 
this result to approximate the sum on the right 
hand side of equation~(\ref{countformpi2}). 

\begin{lemma}
\label{pi_to_alphas}
Let $n \in {\mathbb N}$ and $\alpha_n(x)$ be defined 
by~$(\ref{notationalphak})$. Then we have
\begin{equation}
\label{sum_pi_to_sum_alphas}
\sum_{p\leq \sqrt{x}} 
\pi \! \left(\frac{x}{p}\right) 
= 
\sum_{p\leq \sqrt{x}} 
\alpha_n \! \left(\frac{x}{p}\right) 
+ 
\bigO \!
\left( 
\frac{x \log(\log x)}{(\log x)^{n+1}} 
\right).
\end{equation}
\end{lemma}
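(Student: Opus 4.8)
The plan is to prove equation~(\ref{sum_pi_to_sum_alphas}) by summing the pointwise error estimate from the prime number theorem~(\ref{PNT}) over all primes $p \leq \sqrt{x}$, and then controlling the resulting error sum using the Mertens-type estimate~(\ref{sqrtMform}). First I would apply~(\ref{PNT}) with the argument $x/p$ in place of $x$: for each prime $p \leq \sqrt{x}$ we have
\begin{equation*}
\pi\!\left(\frac{x}{p}\right) = \alpha_n\!\left(\frac{x}{p}\right) + \bigO\!\left( \frac{x/p}{(\log(x/p))^{n+1}} \right).
\end{equation*}
Summing this identity over $p \leq \sqrt{x}$ and using the triangle inequality, the left side becomes the sum in~(\ref{sum_pi_to_sum_alphas}), the main term becomes $\sum_{p \leq \sqrt{x}} \alpha_n(x/p)$ as desired, and the task reduces to bounding the accumulated error $\sum_{p \leq \sqrt{x}} \bigO\big( (x/p) (\log(x/p))^{-(n+1)} \big)$.

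Next I would simplify the error term. For $p \leq \sqrt{x}$ we have $\sqrt{x} \leq x/p \leq x$, so $\log(x/p) \geq \log \sqrt{x} = \tfrac12 \log x$, which gives the uniform bound $(\log(x/p))^{-(n+1)} \leq 2^{n+1} (\log x)^{-(n+1)}$. Pulling this constant factor out, the error sum is bounded by a constant multiple of
\begin{equation*}
\frac{x}{(\log x)^{n+1}} \sum_{p \leq \sqrt{x}} \frac{1}{p}.
\end{equation*}
Here the key input is~(\ref{sqrtMform}), which shows $\sum_{p \leq \sqrt{x}} 1/p = \log(\log x) - \log 2 + M + o(1) = \bigO(\log(\log x))$. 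Substituting this estimate yields an error of order $x \log(\log x) (\log x)^{-(n+1)}$, matching exactly the claimed $\bigO$ term on the right-hand side of~(\ref{sum_pi_to_sum_alphas}).

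I expect the main technical point to be verifying that the implied constant in the pointwise error~(\ref{PNT}) can be chosen uniformly in the argument $x/p$ as $p$ ranges over primes up to $\sqrt{x}$, so that the finitely many $\bigO$ terms genuinely combine into a single $\bigO$ bound after summation. Since~(\ref{PNT}) holds for all real arguments tending to infinity with a fixed implied constant, and since $x/p \geq \sqrt{x} \to \infty$, this uniformity is available; the only care needed is that the argument $x/p$ stays large enough for the asymptotic~(\ref{PNT}) to apply, which is guaranteed by the lower bound $x/p \geq \sqrt{x}$. Apart from this, the argument is a routine combination of a uniform logarithmic lower bound with Mertens' estimate, and no deeper obstacle arises.
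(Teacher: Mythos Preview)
Your proof is correct and follows essentially the same approach as the paper: apply the pointwise bound from~(\ref{PNT}) at $x/p$, use $\log(x/p)=\log x-\log p\ge\tfrac12\log x$ for $p\le\sqrt{x}$ to extract the factor $2^{n+1}/(\log x)^{n+1}$, and then invoke~(\ref{sqrtMform}) to bound $\sum_{p\le\sqrt{x}}1/p$ by $\bigO(\log\log x)$. Your explicit remark on the uniformity of the implied constant (guaranteed by $x/p\ge\sqrt{x}\to\infty$) is a welcome clarification that the paper leaves implicit.
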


\begin{proof}
Using equation~(\ref{PNT}), we have
$$
\left|
\sum_{p\leq \sqrt{x}} 
\pi \! \left(\frac{x}{p}\right) 
- 
\sum_{p\leq \sqrt{x}} 
\alpha_n \! \left(\frac{x}{p}\right)
\right|
\, \leq 
\sum_{p\leq \sqrt{x}} 
\left| 
\pi \! \left(\frac{x}{p}\right) 
- 
\alpha_n \! \left(\frac{x}{p}\right)\right|
\, \leq \, 
c \, 
x \sum_{p\leq \sqrt{x}} \frac{1}{p \, (\log x - \log p)^{n+1}}
\;,
$$
where $c>0$ is a constant.
Equation~(\ref{sum_pi_to_sum_alphas}) then follows
by estimating the right hand side by
$$
\frac{c \, x \, 2^{n+1}}{(\log x)^{n+1}} \sum_{p\leq \sqrt{x}} \frac{1}{p}
=
\bigO \! \left( \frac{x \log(\log x)}{(\log x)^{n+1}} \right),
$$
where the last equality follows from
equation~(\ref{sqrtMform}).
\vskip -4mm
\end{proof}

\subsection{The first two terms of the asymptotic
series for $\pi_2(x)$}
\label{k=2_first_terms}

\noindent Using~(\ref{sum_pi_to_sum_alphas}) for $n=1$, 
we obtain
\begin{equation}
\sum_{p\leq \sqrt{x}} \pi \! \left(\frac{x}{p}\right) 
= 
\sum_{p\leq \sqrt{x}} \frac{x}{p \, (\log x - \log p)} 
+ 
\bigO \! \left( \frac{x \log(\log x)}{(\log x)^2 } \right).
\label{kis1sumform}
\end{equation}
Using Landau~\cite[\S{}56]{Landau:1909:HLV}, we can
rewrite equation~(\ref{landaulog}) 
for any integers $i \in {\mathbb N}$ 
and $n \in {\mathbb N}$ as
\begin{equation}
\sum_{p\leq \sqrt{x}} 
\frac{(\log p)^i}{p} 
=
\frac{(\log x)^i}{i \, 2^i} 
+ 
B_i 
+ 
o
\left( \frac{1}{(\log x)^n}\right).
\label{constBk}
\end{equation}
We will use this to prove the first theorem of this section.

\begin{theorem}
\label{theoremM}
Let $M$ be the Meissel--Mertens constant defined by
$(\ref{defMMcon})$. Then
\begin{equation}
\label{pi_2_first_two_terms}
\pi_2 (x) 
= 
\frac{x \log (\log x)}{\log x} 
+ 
M {\hskip 0.1mm} \frac{x}{\log x} 
+ 
o\left( \frac{x}{\log x}\right).
\end{equation}
\end{theorem}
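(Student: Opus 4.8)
The plan is to start from the counting formula~(\ref{countformpi2}) and show that the quadratic boundary term is negligible while the prime sum produces exactly the two stated terms. First I would dispose of the term $\tfrac{1}{2}\big(\pi(\sqrt{x})-\pi(\sqrt{x})^2\big)$: by the prime number theorem~(\ref{PNT}) we have $\pi(\sqrt{x})\sim 2\sqrt{x}/\log x$, so this term is $\bigO\!\big(x/(\log x)^2\big)$, which is absorbed into the error $o(x/\log x)$. This reduces the problem to analysing $\sum_{p\le\sqrt{x}}\pi(x/p)$. Next I would invoke~(\ref{kis1sumform}), i.e.\ Lemma~\ref{pi_to_alphas} with $n=1$, to replace $\sum_{p\le\sqrt{x}}\pi(x/p)$ by $\sum_{p\le\sqrt{x}}\frac{x}{p(\log x-\log p)}$ at the cost of an error $\bigO\!\big(x\log(\log x)/(\log x)^2\big)$; since $\log(\log x)/\log x\to 0$, this error is again $o(x/\log x)$. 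Everything now hinges on the sum $S=\sum_{p\le\sqrt{x}}\frac{1}{p(\log x-\log p)}$, and the goal becomes to prove that $S=(\log(\log x)+M)/\log x+o(1/\log x)$.

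The core of the argument is the expansion of the geometric factor. For $p\le\sqrt{x}$ one has $0\le \log p/\log x\le 1/2$, so $\frac{1}{\log x-\log p}=\frac{1}{\log x}\sum_{j\ge 0}\big(\log p/\log x\big)^j$, and since all terms are nonnegative I may interchange the two summations to write $\log x\cdot S=\sum_{j\ge 0}(\log x)^{-j}\sum_{p\le\sqrt{x}}(\log p)^j/p$. Substituting~(\ref{sqrtMform}) into the $j=0$ term and~(\ref{constBk}) into the terms $j\ge 1$, the leading contributions are $\log(\log x)-\log 2+M$ from $j=0$ and $\frac{1}{j\,2^j}$ from each $j\ge 1$. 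The crucial observation is the cancellation $-\log 2+\sum_{j\ge 1}\frac{1}{j\,2^j}=-\log 2+\log 2=0$, which is exactly what removes the spurious $-\log 2$ coming from summing only over $p\le\sqrt{x}$ and leaves $\log(\log x)+M$.

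The step I expect to be the main obstacle is making this interchange and the term-by-term use of~(\ref{constBk}) rigorous, since the $o(1)$ error in~(\ref{constBk}) holds for each fixed $j$ as $x\to\infty$ but is not obviously uniform in $j$. I would handle this by truncating the geometric series at a fixed level $N$. For fixed $N$ the finite sum $\sum_{j=0}^{N-1}$ tends, as $x\to\infty$, to $\log(\log x)-\log 2+M+\sum_{j=1}^{N-1}\frac{1}{j\,2^j}+o(1)$ (the constants $B_j$ contribute a finite sum of terms each of order $\bigO(1/\log x)$, hence negligible), while the tail $R_N=\sum_{p\le\sqrt{x}}\frac{1}{p}\,\frac{(\log p/\log x)^N}{1-\log p/\log x}$ can be bounded, using $\frac{1}{1-\log p/\log x}\le 2$ together with~(\ref{constBk}) at $i=N$, by $R_N\le \frac{1}{N\,2^{N-1}}+o(1)$. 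Combining these gives $\limsup_{x\to\infty}\big|\log x\cdot S-\log(\log x)-M\big|\le \sum_{j\ge N}\frac{1}{j\,2^j}+\frac{1}{N\,2^{N-1}}\le \frac{1}{N\,2^{N-2}}$ for every $N$, and letting $N\to\infty$ forces this $\limsup$ to be zero.

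This establishes $S=(\log(\log x)+M)/\log x+o(1/\log x)$, so that $\sum_{p\le\sqrt{x}}\frac{x}{p(\log x-\log p)}=\frac{x\log(\log x)}{\log x}+M\,\frac{x}{\log x}+o\!\big(x/\log x\big)$. Adding back the two $o(x/\log x)$ contributions identified in the first paragraph then yields~(\ref{pi_2_first_two_terms}). I would expect the write-up to be short precisely because the $\log 2$ cancellation does all the work, and because the uniformity difficulty is packaged neatly into the single truncation-and-$\limsup$ estimate rather than requiring a uniform version of~(\ref{constBk}).
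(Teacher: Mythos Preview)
Your proposal is correct and follows essentially the same route as the paper: reduce via the counting formula and Lemma~\ref{pi_to_alphas} to the sum $\sum_{p\le\sqrt{x}}\frac{1}{p(\log x-\log p)}$, expand geometrically with a finite truncation, apply~(\ref{sqrtMform}) and~(\ref{constBk}) term by term, bound the tail by $\frac{2}{(\log x)^N}\sum_{p\le\sqrt{x}}(\log p)^N/p$, and exploit the cancellation $\sum_{j\ge1}\frac{1}{j\,2^j}=\log 2$ by letting the truncation level tend to infinity. The paper's remainder $f(n)$ with bound $\frac{1}{(n+1)2^n}$ is exactly your $R_N$ with bound $\frac{1}{N\,2^{N-1}}$ under the reindexing $N=n+1$, and your concluding $\limsup$ step is the same device the paper uses when it lets $n\to\infty$.
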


\begin{proof}
Using equations~(\ref{countformpi2}),
(\ref{PNT}) for $n=1$, and
(\ref{kis1sumform}), we obtain
\begin{equation}
\label{pi_2_first_two_terms_computations}
\pi_2(x) 
=
\sum_{p\leq \sqrt{x}} 
\pi \! \left(\frac{x}{p}\right) 
+ o \left( \frac{x}{\log x}\right)
= 
\frac{x}{\log x}
\sum_{p\leq \sqrt{x}} \frac{\log x}{p \, (\log x - \log p)} 
+ o\left( \frac{x}{\log x}\right).
\end{equation}
We have the following identity
$$
\frac{\log x}{p \, (\log x - \log p)}
=
\frac{1}{p} 
+ 
\frac{\log p}{\log x}
\left(
\frac{\log x}{p \, (\log x - \log p)}
\right).
$$
Substituting the left hand side into the right hand side,
we obtain, for any natural number $n \in {\mathbb N}$,
that
$$
\frac{\log x}{p \, (\log x - \log p)}
=
\frac{1}{p} 
\sum_{i=0}^n 
\left( 
\frac{\log p}{\log x} 
\right)^{\!\!i}
\, + \, 
\left(
\frac{\log p}{\log x}
\right)^{\!\!n+1}
\!
\left(
\frac{\log x}{p \, (\log x - \log p)}
\right).
$$
Summing over all primes $p \le \sqrt{x}\,$, we get
\begin{equation*}
\sum_{p\leq \sqrt{x}} 
\frac{\log x}{p \, (\log x - \log p)}
=
\sum_{i=0}^n \left( \frac{1}{(\log x)^{i}} 
\sum_{p\leq \sqrt{x}} \frac{(\log p)^i}{p}
\right) 
+
\frac{1}{(\log x)^{n}} \sum_{p\leq \sqrt{x}} 
\frac{(\log p)^{n+1}}{p \, (\log x - \log p)}.
\end{equation*}
Each term on the right hand side can be evaluated
using equations~(\ref{sqrtMform}) and~(\ref{constBk}),
with $o(1)$ accuracy, as
\begin{equation}
\sum_{p\leq \sqrt{x}} 
\frac{\log x}{p \, (\log x - \log p)}
=
\log(\log x) 
- \log 2 
+ 
M
+ 
\sum_{i=1}^n
\frac{1}{i \, 2^i}
+
f(n) 
+
o(1) {\hskip 0.2mm} ,
\label{intermform}
\end{equation}
where $f(n)$ is a decreasing function of $n$ satisfying
$f(n) \to 0$ as $n \to \infty$. This can be deduced by
applying equation~(\ref{constBk}) to the error term
$$
\frac{1}{(\log x)^{n}} 
\sum_{p\leq \sqrt{x}} 
\frac{(\log p)^{n+1}}{p \, (\log x - \log p)}
\leq 
\frac{2}{(\log x)^{n+1}} 
\sum_{p\leq \sqrt{x}} \frac{(\log p)^{n+1}}{p}
=
\frac{1}{(n+1) \, 2^n}
+
o(1).
$$
Since we have
$$
- 
\log 2 
+ 
\sum_{i=1}^n
\frac{1}{i \, 2^i}
+
f(n) 
\to
0,
\qquad
\mbox{as}
\quad
n \to \infty,
$$
equation~(\ref{intermform}) implies that
$$
\sum_{p\leq \sqrt{x}} 
\frac{\log x}{p \, (\log x - \log p)}
=
\log(\log x) 
+ 
M
+
o(1).
$$
Substituting into
equation~(\ref{pi_2_first_two_terms_computations}),
we obtain formula~(\ref{pi_2_first_two_terms}).
\end{proof}

\subsection{Asymptotic series for the counting
function of semiprimes}
\label{k=2_complete_series}

\noindent To derive formulas for all terms in the asymptotic
series of $\pi_2$, we first define an auxiliary sequence of
numbers $q_n$ for $n \in {\mathbb N}$ by
\begin{equation}
q_n
=
\sum_{i=1}^{n-1}
\frac{2^{i}-1}{i},
\quad
\mbox{for}
\; \; n \ge 2,
\qquad
\mbox{and}
\qquad
q_1=0 \, .
\label{defqn}
\end{equation}
Then $q_n$ is an increasing sequence of rational numbers
with the first few terms given
as $q_1 = 0,$ $q_2 = 1$, $q_3 = 5/2$, $q_4 = 29/6$,
$q_5 = 103/12$ and $q_6 = 887/60$, which satisfies 
the following identity.

\begin{lemma}
\label{lemmaqn}
Let $n \in {\mathbb N}$ and let $q_n$ be given 
by equation~$(\ref{defqn})$. Then we have
\begin{equation}
\label{qnidentity}
\sum_{i=1}^{\infty} 
\binom{n+i-1}{n-1} \, \frac{1}{i \, 2^i} 
= 
q_n + \log 2 \, . 
\end{equation}
\end{lemma}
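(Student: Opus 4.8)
The plan is to recognise the left-hand side as the value at $x = 1/2$ of a power series that is the term-by-term integral of a negative binomial series, and then to evaluate the resulting integral by a telescoping recursion in $n$.

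First I would introduce the generating function
\[
F(x) = \sum_{i=1}^{\infty} \binom{n+i-1}{n-1} \frac{x^i}{i},
\]
which converges absolutely for $|x| < 1$ since $\binom{n+i-1}{n-1}$ grows only polynomially in $i$ (like $i^{n-1}/(n-1)!$), while the target sum is $F(1/2)$. Differentiating term by term inside the radius of convergence and using the standard negative binomial expansion $\sum_{i=0}^{\infty} \binom{n+i-1}{n-1} x^i = (1-x)^{-n}$ (whose $i=0$ term equals $1$), I obtain
\[
F'(x) = \frac{1}{x}\bigl( (1-x)^{-n} - 1 \bigr).
\]
Since $F(0) = 0$, integrating back gives $F(1/2) = I_n$, where $I_n = \int_0^{1/2} \frac{(1-t)^{-n} - 1}{t}\,\mathrm{d}t$. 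It therefore suffices to show that $I_n = q_n + \log 2$.

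The key step is a recursion. Using the algebraic identity $(1-t)^{-1} - 1 = t/(1-t)$ to simplify the numerator, I would compute, for $n \ge 2$,
\[
I_n - I_{n-1} = \int_0^{1/2} \frac{(1-t)^{-(n-1)}\bigl[(1-t)^{-1}-1\bigr]}{t}\,\mathrm{d}t = \int_0^{1/2} (1-t)^{-n}\,\mathrm{d}t = \frac{2^{n-1}-1}{n-1}.
\]
The base case $n=1$ is a direct integration, $I_1 = \int_0^{1/2} (1-t)^{-1}\,\mathrm{d}t = \log 2$. Telescoping and reindexing with $i = m-1$ then yields $I_n = \log 2 + \sum_{m=2}^{n} \frac{2^{m-1}-1}{m-1} = \log 2 + \sum_{i=1}^{n-1}\frac{2^i-1}{i} = \log 2 + q_n$, which is exactly the claimed identity~$(\ref{qnidentity})$; the case $n=1$ is recovered because $q_1 = 0$.

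The only point requiring genuine care, rather than routine computation, is the justification for differentiating $F$ term by term and for recovering $F$ from $F'$ by integration up to $x = 1/2$. This is immediate because $1/2$ lies strictly inside the radius of convergence $1$ of the series for $(1-x)^{-n}$, so both interchanges are legitimate. I expect the telescoping recursion itself to be the crux of the argument, but it becomes short once the integrand has been recast via $(1-t)^{-1}-1 = t/(1-t)$, which collapses the difference $I_n - I_{n-1}$ to an elementary integral.
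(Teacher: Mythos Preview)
Your proof is correct and follows essentially the same route as the paper: both recognise the sum as $\int_0^{1/2}\frac{(1-t)^{-n}-1}{t}\,\mathrm{d}t$ via term-by-term differentiation of the negative binomial series, and both evaluate this integral by reducing the integrand to a sum of powers $(1-t)^{-i}$. The only cosmetic difference is that the paper applies the geometric-series identity $\frac{(1-t)^{-n}-1}{t}=\sum_{i=1}^{n}(1-t)^{-i}$ in one stroke before integrating, whereas you unroll the same identity as a telescoping recursion $I_n-I_{n-1}=\int_0^{1/2}(1-t)^{-n}\,\mathrm{d}t$.
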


\begin{proof}
Considering the binomial series
\begin{equation*}
(1-t)^{-n} 
= 
1 + t \, \sum_{i=0}^{\infty} 
\binom{n+i}{n-1} \, t^i,
\qquad
\mbox{for} \; t\in(-1,1),
\end{equation*}
we can rewrite it as
$$
\sum_{i=0}^{\infty} \binom{n+i}{n-1} \, t^i 
= 
\frac{(1-t)^{-n}-1}{t}
= 
\sum_{i=1}^n
(1-t)^{-i}.
$$
Integrating, we get
\begin{equation*}
\sum_{i=0}^{\infty} 
\binom{n+i}{n-1} \, \frac{t^{i+1}}{i+1} 
= 
-
\log(1-t)
+
\sum_{i=2}^n
\frac{(1-t)^{-i+1}-1}{i-1},
\end{equation*}
which holds for $t\in(-1,1)$.
Substituting $t=1/2$, we obtain (\ref{qnidentity}).
\end{proof}

\noindent
We will use Lemma~\ref{lemmaqn} in the proof of the following
theorem, giving the asymptotic series for the semiprime
counting function $\pi_2(x).$

\begin{theorem}
\label{theoremgen}
The constants $C_n$ appearing in the asymptotic 
expansion~$(\ref{aspi2})$ are given 
by equation~$(\ref{constCk})$ for $n \in {\mathbb N}$
and as $C_0 = B_0 = M$ for $n=0.$ 
\end{theorem}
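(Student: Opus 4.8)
My plan is to carry the argument of Theorem~\ref{theoremM} to all orders, using the full asymptotic expansion~(\ref{PNT}) for $\pi$ rather than just its first term. I would start from the counting formula~(\ref{countformpi2}). The quadratic term $(\pi(\sqrt{x})-\pi(\sqrt{x})^2)/2$ is negligible to all the orders we care about: by the prime number theorem $\pi(\sqrt{x})^2 = \bigO\!\big(x/(\log x)^2\big)$, which is absorbed into every error term below. So the entire asymptotic series comes from $\sum_{p\leq\sqrt{x}}\pi(x/p)$, and by Lemma~\ref{pi_to_alphas} this sum equals $\sum_{p\leq\sqrt{x}}\alpha_n(x/p)$ up to an error $\bigO\!\big(x\log(\log x)/(\log x)^{n+1}\big)$.

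The key computation is to expand $\alpha_n(x/p)$ termwise. Writing out the definition~(\ref{notationalphak}) gives
\begin{equation*}
\sum_{p\leq\sqrt{x}}\alpha_n\!\left(\frac{x}{p}\right)
=
x\sum_{p\leq\sqrt{x}}\frac{1}{p}
\sum_{j=0}^{n-1}\frac{j!}{(\log x-\log p)^{j+1}}.
\end{equation*}
The plan is to expand each factor $(\log x-\log p)^{-(j+1)}$ as a geometric-type series in $\log p/\log x$ using the generalized binomial theorem, exactly as in Theorem~\ref{theoremM} but now keeping every power. This produces binomial coefficients $\binom{n+i-1}{n-1}$ multiplying $(\log p)^{i}/((\log x)^{\text{power}}\,p)$, and summing over $p$ turns these into the constants $B_i$ via~(\ref{constBk}) (with the main term $(\log x)^i/(i\,2^i)$) and into $\log(\log x)-\log 2+M$ via~(\ref{sqrtMform}). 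I would collect the result as a double sum, reorganize it by powers of $1/(\log x)$, and read off the coefficient of $x\log(\log x)/(\log x)^n$ and of $x/(\log x)^n$.

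The $\log(\log x)$ coefficients should collapse to $(n-1)!$, matching the first sum in~(\ref{aspi2}); this is a clean check that the leading polynomial degree is correct. The real work is the constant coefficient $C_{n-1}$. Here the $-\log 2$ pieces from~(\ref{sqrtMform}) and the $(\log x)^i/(i\,2^i)$ main terms from~(\ref{constBk}) must combine, and this is precisely where Lemma~\ref{lemmaqn} enters: the identity~(\ref{qnidentity}) is engineered to resum exactly the series $\sum_i \binom{n+i-1}{n-1}\frac{1}{i\,2^i}=q_n+\log 2$, so that all the $\log 2$ contributions cancel and the $q_n$ terms assemble into the harmonic-number sum $\sum_{i=1}^n 1/i$ appearing in~(\ref{constCk}). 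The surviving $B_i$ terms give $\sum_{i=0}^n B_i/i!$ after accounting for the factorials $j!$ and the binomial coefficients.

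The main obstacle I anticipate is bookkeeping: correctly tracking the double index $(j,i)$, justifying the interchange of the finite sum over $j$ with the infinite binomial expansion in $i$ (legitimate since the tail is controlled by~(\ref{constBk}) to $o(1)$ accuracy at each fixed order, as in the error-term estimate in Theorem~\ref{theoremM}), and then verifying that after multiplying by $n!$ the reorganized constant is exactly $n!\big(\sum_{i=0}^n B_i/i! - \sum_{i=1}^n 1/i\big)$. The case $n=0$ (i.e.\ $C_0$) is handled separately and already follows from Theorem~\ref{theoremM}, which identifies it as $M=B_0$.
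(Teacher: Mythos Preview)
Your overall strategy---expand each $(\log x-\log p)^{-(j+1)}$ by the binomial series, feed in~(\ref{sqrtMform}) and~(\ref{constBk}), and invoke Lemma~\ref{lemmaqn}---is exactly the paper's route. But there is a genuine gap: your dismissal of the quadratic term in~(\ref{countformpi2}) is wrong. You write that $\pi(\sqrt{x})^2=\bigO(x/(\log x)^2)$ ``is absorbed into every error term below,'' but $x/(\log x)^2$ is precisely the order of the $C_1$ term in~(\ref{aspi2}), and the full expansion of $-\tfrac12\pi(\sqrt{x})^2$ contributes at \emph{every} order $n\ge 1$. Concretely, inserting $\pi(\sqrt{x})=\alpha_\ell(\sqrt{x})+\cdots=\frac{2\sqrt{x}}{\log x}\sum_{i\ge 0}\frac{i!\,2^i}{(\log x)^i}+\cdots$ and squaring shows that $-\tfrac12\pi(\sqrt{x})^2$ contributes
\[
-\,2^{n}\sum_{i=1}^{n}(i-1)!\,(n-i)!
\]
to the coefficient of $x/(\log x)^{n+1}$, i.e.\ to $C_n$.

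This omission is also why your claimed endgame cannot close. From $\sum_{p\le\sqrt{x}}\pi(x/p)$ alone the coefficient of $x/(\log x)^{n+1}$ comes out as $n!\big(q_{n+1}+\sum_{i=0}^{n}B_i/i!\big)$, with $q_{n+1}=\sum_{i=1}^{n}(2^i-1)/i$; the $q$'s do \emph{not} by themselves collapse to $-H_n$. It is only after adding the quadratic contribution above and using the identity $\frac{2^{n}}{n}\sum_{i=0}^{n-1}\binom{n-1}{i}^{-1}=\sum_{i=1}^{n}2^{i}/i$ that one obtains
\[
C_n \;=\; n!\Big(\sum_{i=1}^{n}\frac{2^i-1}{i}+\sum_{i=0}^{n}\frac{B_i}{i!}-\sum_{i=1}^{n}\frac{2^i}{i}\Big)
\;=\; n!\Big(\sum_{i=0}^{n}\frac{B_i}{i!}-H_n\Big),
\]
which is~(\ref{constCk}). (For a sanity check at $n=1$: the sum over $p$ gives $q_2+B_0+B_1=1+M+B_1$, the quadratic term gives $-2$, and the total $M+B_1-1$ matches~(\ref{constCk}).) A secondary point: the paper justifies truncating the binomial tail via explicit Rosser--Schoenfeld error terms $\mathcal{L}_i(x)$ together with~(\ref{ineq_noRH}), which is somewhat more than the $o(1)$ bookkeeping you sketch; you will need a uniform-in-$i$ bound of that sort to make the interchange rigorous.
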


\begin{proof}
\noindent
The case $n=0$ is studied in Theorem~\ref{theoremM}, which
states that $C_0 = M.$ To derive equation~(\ref{constCk}),
we again use formula~(\ref{countformpi2}) from 
Lemma~\ref{counting_pi2} and approximate each term
using the prime number theorem~(\ref{PNT}). We
need to analyze sums of the form 
\begin{equation}
S_n(x)
=
\sum_{p\leq \sqrt{x}} 
\frac{(\log x)^n}{p \, (\log x - \log p)^n} 
=
\sum_{p\leq \sqrt{x}} 
\frac{1}{p}
\left( 1-\frac{\log p}{\log x}\right)^{\!\!-n} \!\! .
\label{defSnx}
\end{equation}
Using the binomial series on the right hand side, we get
\begin{equation}
S_n(x) 
\, = \, 
\sum_{p\leq \sqrt{x}} 
\frac{1}{p} 
\sum_{i=0}^{\infty} \binom{n+i-1}{n-1} 
\left( \frac{\log p}{\log x}\right)^{\!\!i}
\, = \,
\sum_{i=0}^{\infty} \binom{n+i-1}{n-1} 
\frac{1}{(\log x)^i} \sum_{p\leq \sqrt{x}} 
\frac{(\log p)^i}{p} \, . \;\;
\label{S_k_formula0}
\end{equation}
Substituting $x^2$ for $x$, we obtain
\begin{equation}
\label{S_k_formula1}
S_n(x^2) 
= 
\sum_{i=0}^{\infty} \binom{n+i-1}{n-1} 
\frac{1}{2^i (\log x)^i} \sum_{p\leq x} 
\frac{(\log p)^i}{p} \, .
\end{equation}
To estimate the sums over primes on the right hand side,
we apply the result of Rosser and
Schoenfeld~\cite[equation (2.26)]{Rosser:1962:AFS}, 
which can be formulated as
$$
\sum_{p\leq x} \frac{1}{p} = \log (\log x) + \bigL_0(x),
\qquad
\sum_{p\leq x} 
\frac{(\log p)^i}{p} = \frac{(\log x)^i}{i} + \bigL_i(x),
$$
where the error terms $\bigL_i(x)$ are defined by
\begin{eqnarray*}
\bigL_0(x) &=&
-
\log(\log 2)
+ 
\frac{\mbox{li}(2)}{2} 
+ 
\frac{\pi(x) - \mbox{li}(x)}{x}
+ 
\int_2^{x} 
\frac{\pi(y) - \mbox{li}(y)}{y^2} \,
\mbox{d}y,
\\
\bigL_i(x) &=& 
-
\frac{(\log 2)^i}{i} 
+ 
\frac{(\log 2)^i \, \mbox{li}(2)}{2} 
+ \frac{(\log x)^i}{x} \big( \pi(x) - \mbox{li}(x) \big) \\
&&
+ \, \int_2^{x} 
\frac{(\log y-i) \,
(\log y)^{i-1}}{y^2} 
\big( \pi(y) - \mbox{li}(y) \big) \,
\mbox{d}y,
\qquad \mbox{for} \; i \in {\mathbb N}.
\end{eqnarray*}
Using this notation and identity~(\ref{qnidentity})
in Lemma~\ref{lemmaqn}, we rewrite
equation~(\ref{S_k_formula1}) as
\begin{equation}
\label{S_k_formula2}
S_n(x^2) 
= 
\log (\log x)
+
q_n 
+
\log(2)
+
\sum_{i=0}^{\infty} \binom{n+i-1}{n-1} 
\frac{1}{2^i} 
\frac{\bigL_i(x)}{(\log x)^i} \, .
\end{equation}
Using the inequality (\ref{ineq_noRH}), there exist 
constants $d_1$ and $d_2$ such that
\begin{eqnarray}
\left|\frac{\bigL_i(x)}{(\log x)^i} \right|
&<& 
\frac{2}{(\log x)^i} 
+ 
\frac{\left| \pi(x) - \mbox{li}(x) \right|}{x} 
+ 
\frac{1}{(\log x)^i} 
\int_2^{x} 
\frac{|i - \log y| \, (\log y)^{i-1}}{y^2} 
\big| \pi(y) - \mbox{li}(y) \big| \, \mbox{d}y
\nonumber
\\
&\leq& 
\frac{2}{(\log x)^i} 
+ 
d_1 \,
\frac{
\exp\left( 
- d_2 \sqrt{\log x} \right)
}{(\log x)^{3/4}}
+ 
\frac{i 
\, I(i-1,x)
+
I(i,x)}{(\log x)^i} \, ,
\label{estimateLi}
\end{eqnarray}
where we define 
(note that we allow the second argument to be $\infty$ in this definition):
$$
I(i,x) 
= 
d_1
\int_2^{x} 
(\log y)^{i-3/4} \,  
\frac{
\exp\left( -d_2 \sqrt{\log y} 
\right)}{y} \, \mbox{d}y \, .
$$
Choose $\ell \in {\mathbb N}$. Our goal is to use
(\ref{estimateLi}) to estimate the rate of 
convergence of the sum on the right hand of 
equation~(\ref{S_k_formula2}). To do this we
first observe that, for $i \ge \ell$,
we have
\begin{equation}
\frac{
I(i,x)}{(\log x)^i}
\le
\frac{d_1}{(\log x)^\ell}
\int_2^{x} 
(\log y)^{\ell-3/4} \,  
\frac{
\exp\left( -d_2 \sqrt{\log y} 
\right)}{y} \, \mbox{d}y
\le
\frac{I(\ell,\infty)}{(\log x)^\ell} \,.
\label{Iiellineq}
\end{equation}
Using inequalities~(\ref{estimateLi}) 
and~(\ref{Iiellineq}) and assuming $\log(x) \ge 1$, 
we can estimate the remainder 
of the series on the right hand of 
equation~(\ref{S_k_formula2}) as
\begin{eqnarray*}
&& \left|
\sum_{i=\ell+1}^{\infty} 
\binom{n+i-1}{n-1} 
\frac{1}{2^i} 
\frac{\bigL_i(x)}{(\log x)^i}
\right|
\; \leq \; 
\left(
\frac{2}{(\log x)^{\ell+1}} 
+ 
d_1 \,
\frac{
\exp\left( 
- d_2 \sqrt{\log x} \right)
}{(\log x)^{3/4}}
\right)
\sum_{i=\ell+1}^{\infty} 
\binom{n+i-1}{n-1} 
\frac{1}{2^{i}}
\\
&& 
\hskip 2cm
+ 
\;
\frac{
\, I(\ell,\infty)
}{(\log x)^{\ell+1}}
\sum_{i=\ell+1}^{\infty} 
\binom{n+i-1}{n-1} 
\frac{i}{2^i} 
\;
+ 
\;
\frac{I(\ell+1,\infty)}{(\log x)^{\ell+1}}
\sum_{i=\ell+1}^{\infty} 
\binom{n+i-1}{n-1} 
\frac{1}{2^i} \,. 
\end{eqnarray*}
Since all three sums on the right hand side converge
independently of $x$, we deduce that the
remainder is of the order
$\bigO \big( (\log x)^{-(\ell+1)}) \big)$.
Therefore, equation~(\ref{S_k_formula2}) becomes
\begin{equation*}
S_n(x^2) 
= 
\log (\log x)
+
q_n 
+
\log(2)
+
\sum_{i=0}^{\ell} \binom{n+i-1}{n-1} 
\frac{1}{2^i} 
\frac{\bigL_i(x)}{(\log x)^i}    
+ 
\bigO\left( \frac{1}{(\log x)^{\ell+1}} \right).
\end{equation*}
This means that an asymptotic expansion of 
$S_n(x^2)$ in terms of negative powers of 
$\log x$ is given by the sum of the asymptotic 
series of terms in equation~(\ref{S_k_formula1}).
The same is true for $S_n(x)$ in 
equation~(\ref{S_k_formula0}). Thus, using
equations~(\ref{sqrtMform}), (\ref{constBk}),
(\ref{qnidentity}) and~(\ref{S_k_formula0}),
we obtain
\begin{eqnarray}
S_n(x) 
&=&
\sum_{i=0}^{\ell} \binom{n+i-1}{n-1} 
\frac{1}{(\log x)^i} \sum_{p\leq \sqrt{x}} 
\frac{(\log p)^i}{p}
+ 
\bigO\left( \frac{1}{(\log x)^{\ell+1}} \right)
\nonumber
\\
&=& 
\log(\log x) 
+
M 
+ 
q_n
+
\sum_{i=1}^{\ell} 
\binom{n+i-1}{n-1} \, \frac{B_i}{(\log x)^i} 
+ 
\bigO\left( \frac{1}{(\log x)^{\ell+1}} \right).
\label{Siasympt}
\end{eqnarray}
Using equations~(\ref{notationalphak})
and~(\ref{sum_pi_to_sum_alphas}), 
we have
\begin{equation*}
\sum_{p\leq \sqrt{x}} 
\pi \! \left(\frac{x}{p}\right) 
+ 
o
\left(\! 
\frac{x}{(\log x)^{\ell}} 
\!\right)
=
\sum_{n=1}^{\ell}
\sum_{p\leq \sqrt{x}} 
\frac{x \, (n-1)!}{p \, (\log x - \log p)^{n}}
=
\sum_{n=1}^{\ell} (n-1)! \,
\frac{x \, S_n(x)}{(\log x)^n} \,,
\end{equation*}
where we used the definition~(\ref{defSnx}) 
of $S_n(x)$ to get the second equality.
Using equation~(\ref{Siasympt}) and notation
$B_0 = M$, we obtain
\begin{equation*}
\sum_{p\leq \sqrt{x}} 
\pi \! \left(\frac{x}{p}\right) 
=
\sum_{n=1}^{\ell} (n-1)! \, 
\frac{x \log(\log x)}{(\log x)^n} 
+ 
\sum_{n=1}^{\ell} (n-1)!
\left(
q_n
+
\sum_{i=0}^{n-1} 
\frac{B_{i}}{i!}
\right) 
\frac{x}{(\log x)^n} 
+
o
\left(\! 
\frac{x}{(\log x)^{\ell}} 
\!\right).
\end{equation*}
Thus, using formula~(\ref{countformpi2}) and 
the prime number theorem~(\ref{PNT}), 
we obtain the asymptotic expansion~(\ref{aspi2}), 
where we have
$$
C_{n} 
=
n!
\left(
q_{n+1}
+
\sum_{i=0}^{n} 
\frac{B_{i}}{i!}
\right)
-
2^{n} \sum_{i=1}^{n} (i-1)! \, (n-i)! \; .
$$
This can be further simplified by using 
definition~(\ref{defqn}) of $q_n$. We get
$$
C_n
= 
n! 
\left( 
\sum_{i=1}^{n} \frac{2^i-1}{i} 
+
\sum_{i=0}^{n} \frac{B_i}{i!} 
-
\frac{2^{n}}{n} 
\sum_{i=0}^{n-1} 
\frac{1}{\binom{n-1}{i}}
\right) 
= 
n! 
\left( 
\sum_{i=1}^{n} \frac{2^i-1}{i} 
+
\sum_{i=0}^{n} \frac{B_i}{i!} 
-
\sum_{i=1}^{n} \frac{2^{i}}{i}
\right).
$$
Subtracting the first and the third sum, we obtain
(\ref{constCk}).
\end{proof}

\section{Computing the constants}
\label{sectionCkcomp}

\noindent
In this section, we use a fast converging series 
to determine the values of the constants $B_n$ and, 
as a result, of the constants $C_n$ given by equation
(\ref{constCk}). The first constant, $B_0=C_0=M$, 
is the well-studied Meissel--Mertens constant, 
so we will focus on constants $B_n$ in the 
case $n \geq 1$. They have been 
defined by equations~(\ref{landaulog}) 
or~(\ref{constBk}), which can be rewritten as
\begin{equation}
B_n
= 
\lim_{x\rightarrow \infty} 
\left(
\sum_{p\leq \sqrt{x}} \frac{(\log p)^n}{p} 
- \frac{(\log x)^n}{n \, 2^n}
\right)
= 
\lim_{x\rightarrow \infty} 
\left(
\sum_{p\leq x} \frac{(\log p)^n}{p} 
- \frac{(\log x)^n}{n}\right).
\label{constBklimit}
\end{equation}
To derive a formula for evaluating $B_n$ on a computer, 
we use the prime zeta function~\cite{Froberg:1968:PZF}
defined by
\begin{equation}
P(s) = \sum_{p} \frac{1}{p^s} \;,
\qquad \quad
\mbox{for} \; \; s \in (1,\infty).
\label{primezeta}
\end{equation}
Differentiating equation~(\ref{primezeta}), we get 
the formula for the $n$-th derivative of the
prime zeta function as
\begin{equation}
P^{(n)}(s)
=
(-1)^n \, 
\sum_{p} \frac{(\log p)^n}{p^{s}}
=
(-1)^n \, 
\sum_{p \le x} \frac{(\log p)^n}{p^{s}}
+
(-1)^n \, 
\sum_{p > x} \frac{(\log p)^n}{p^{s}}.
\label{derprimezeta1}
\end{equation}
The prime zeta function $P(s)$ can also be related to 
the Riemann zeta function $\zeta(s)$ through the
formula~\cite{Froberg:1968:PZF}
\begin{equation*}
P(s)
=
\sum_{i=1}^{\infty}
\mu(i) 
\,
\frac{\log \big( \zeta(i \, s) \big)}{i} 
\, ,
\qquad \quad
\mbox{for} \; \; s \in (1,\infty),
\end{equation*}
where $\mu(n)$ is the M\"obius function. Taking the
derivative of order $n$ of this expression, 
we obtain
\begin{equation*}
P^{(n)}(s)
=
\sum_{i=1}^{\infty} 
\mu(i) 
\, 
i^{n-1} 
\,
\left(\frac{\zeta'}{\zeta}\right)^{\!\!(n-1)} 
\!\!\!\!\!\!\!\!(i \, s) \, .
\end{equation*}
Substituting into (\ref{derprimezeta1}), we obtain
\begin{equation}
\sum_{p \le x} \frac{(\log p)^n}{p^{s}}
=
(-1)^n \, 
\sum_{i=1}^{\infty} 
\mu(i) 
\, 
i^{n-1} 
\,
\left(\frac{\zeta'}{\zeta}\right)^{\!\!(n-1)} 
\!\!\!\!\!\!\!\!(i \, s)
\; - \; 
\sum_{p > x} \frac{(\log p)^n}{p^{s}} \, .
\label{fromeqder}
\end{equation}
Using integration by parts, we obtain
$$
\int_{1}^{\infty} \frac{(\log u)^{n-1}}{u^{s}} 
\; \mbox{d}u 
=
\frac{(n-1)!}{(s-1)^n} 
=
-
\left(
\frac{(-1)^n}{s-1}
\right)^{\!\!(n-1)}
\!\! .
$$
Thus, the second sum on the right hand side of
equation~(\ref{fromeqder}) can be 
approximated by
$$
\sum_{p > x} \frac{(\log p)^n}{p^{s}}
=
\int_{x}^{\infty} \frac{(\log u)^{n-1}}{u^{s}} 
\; \mbox{d}u 
+
o(1)
=
-
\left(
\frac{(-1)^n}{s-1}
\right)^{\!\!(n-1)}
\!\! - \,
\int_{1}^{x} \frac{(\log u)^{n-1}}{u^{s}} 
\; \mbox{d}u 
+
o(1) \, .
$$
Substituting into equation~(\ref{fromeqder}), we
get
\begin{equation*}
\sum_{p \le x} \frac{(\log p)^n}{p^{s}}
-
\int_{1}^{x} \frac{(\log u)^{n-1}}{u^{s}} 
\; \mbox{d}u 
=
(-1)^n \, 
\sum_{i=1}^{\infty} 
\mu(i) 
\, 
i^{n-1} 
\,
\left(\frac{\zeta'}{\zeta}\right)^{\!\!(n-1)} 
\!\!\!\!\!\!\!\!(i \, s)
\, + \,
\left(
\frac{(-1)^n}{s-1}
\right)^{\!\!(n-1)}
\!\!\!\! 
+ \,
o(1) \, .
\end{equation*}
Taking the limit as $s\rightarrow 1$ and substituting
into equation~(\ref{constBklimit}), we obtain
\begin{equation}
B_n
=
(-1)^n 
\sum_{i=2}^{\infty} 
\mu(i) 
\, 
i^{n-1} 
\left(\frac{\zeta'}{\zeta}\right)^{\!\!(n-1)}
\!\!\!\!\!\!\!\!(i)
\; + \; 
(-1)^n
\lim_{s\rightarrow 1} 
\left( 
\frac{\zeta'(s)}{\zeta(s)}
+
\frac{1}{s-1}
\right)^{\!\!(n-1)} \!\!\!\! ,
\label{Bkform}
\end{equation}
where the first term on the right hand side is a quickly
converging series and the limit in the second term
can be evaluated using the Laurent expansion of $\zeta(s)$
around $s=1$. This is given by
$$
\zeta (s) 
= 
\frac{1}{s-1} 
+ 
\sum_{n=0}^{\infty} 
\frac{(-1)^n}{n!} 
\, 
\gamma_n 
\,
(s-1)^n,
$$
where the Stieltjes constants $\gamma_n$ are computed 
to $20$ significant digits in~\cite{Bohman:1988:SFD}. 
Then, the Laurent expansion of the logarithmic
derivative~\cite{Broughan:2009:ERF} of the Riemann 
zeta function is
\begin{eqnarray}
\frac{\zeta'(s)}{\zeta(s)} 
&=& 
-\frac{1}{s-1} 
+ \gamma_0
+ (-2 \gamma_1 - \gamma_0 ^2) (s-1)
+ \left( \frac{3}{2} \gamma_2 + 3 \gamma_0 \gamma_1 
+ \gamma_0 ^3 \right) (s-1)^2
\nonumber
\\
&&
{\hskip 1cm}
+ 
\;
\left( -\frac{2}{3} \gamma_3 - 2\gamma_0 \gamma_2 
-2 \gamma_1 ^2 - 4\gamma_0 ^2 \gamma_1 - \gamma_0 ^4 
\right) (s-1)^3 \, + \, \ldots.
\label{laurent}
\end{eqnarray}
Constants $B_n$ computed by formula~(\ref{Bkform}) using
the Laurent series~(\ref{laurent}) are presented in
Table~\ref{tablecbk}. Once we know constants 
$B_n$ to the desired accuracy, we can use equation~(\ref{constCk}) to calculate constants
$C_n$. They are also presented in Table~\ref{tablecbk}
to 20 significant digits.

\begin{table}[t]
{\normalsize
\begin{center}
\begin{tabular}{|r|l|l|l|}
\hline
$n$ & {\hskip 17mm}$B_n$ & {\hskip 19mm}$C_n$ \\
\hline
{\hskip 3mm}$0$\rule{0pt}{3.75mm} 
& {\hskip 3mm}$0.26149721284764278375$
& {\hskip 3mm}$0.26149721284764278375$
\\
{\hskip 3mm}$1$ 
& $-1.3325822757332208817$ 
& $-2.0710850628855780875$
\\
{\hskip 3mm}$2$ 
& $-2.5551076154464547041$ 
& $-7.6972777412176108802$
\\
{\hskip 3mm}$3$ 
& $-10.253827096911327612$ 
& $-35.345660320564161516$
\\
{\hskip 3mm}$4$ 
& $-59.332397971808450296$ 
& $-206.71503925406509339$
\\
{\hskip 3mm}$5$ 
& $-453.62459086132753356$ 
& $-1.5111997871316530251  \times 10^3$
\\
{\hskip 3mm}$6$ 
& $-4.3591249600559955673 \times 10^3$ 
& $-1.3546323682845914021 \times 10^4$
\\
{\hskip 3mm}$7$ 
& $-5.0684840978914262902 \times 10^4$ 
& $-1.4622910675883565523 \times 10^5$
\\
{\hskip 3mm}$8$ 
& $-6.9270677393697978276 \times 10^5$ 
& $-1.8675796280076650637 \times 10^6$
\\
{\hskip 3mm}$9$ 
& $-1.0884508606344556845 \times 10^7$ 
& $-2.7733045258413542557 \times 10^7$
\\
$10$ 
& $-1.9329009099289751454 \times 10^8$ 
& $-4.7098342357703294361 \times 10^8$
\\
\hline
\end{tabular}
\end{center}
}
\caption{Table of constants $B_n$ and $C_n$, for
$n=0,1,2,\dots,10$, defined by equations~$(\ref{constBk})$
and~$(\ref{constCk})$, which appear in the asymptotic
expansion of $\pi_2(x)$. The values of constants
$B_n$ are computed by formula~$(\ref{Bkform})$ using
the Laurent series~$(\ref{laurent})$. The values 
of constants $C_n$ are computed by
equation~$(\ref{constCk})$.
\label{tablecbk}}
\end{table}

\section{Computational results: behaviour 
of error terms}

\label{sectionerrorterms}

\begin{figure}
\leftline{\hskip 2mm (a) \hskip 6.95cm (b)}
\centerline{{\hskip 4mm}
\includegraphics[height=5.5cm]{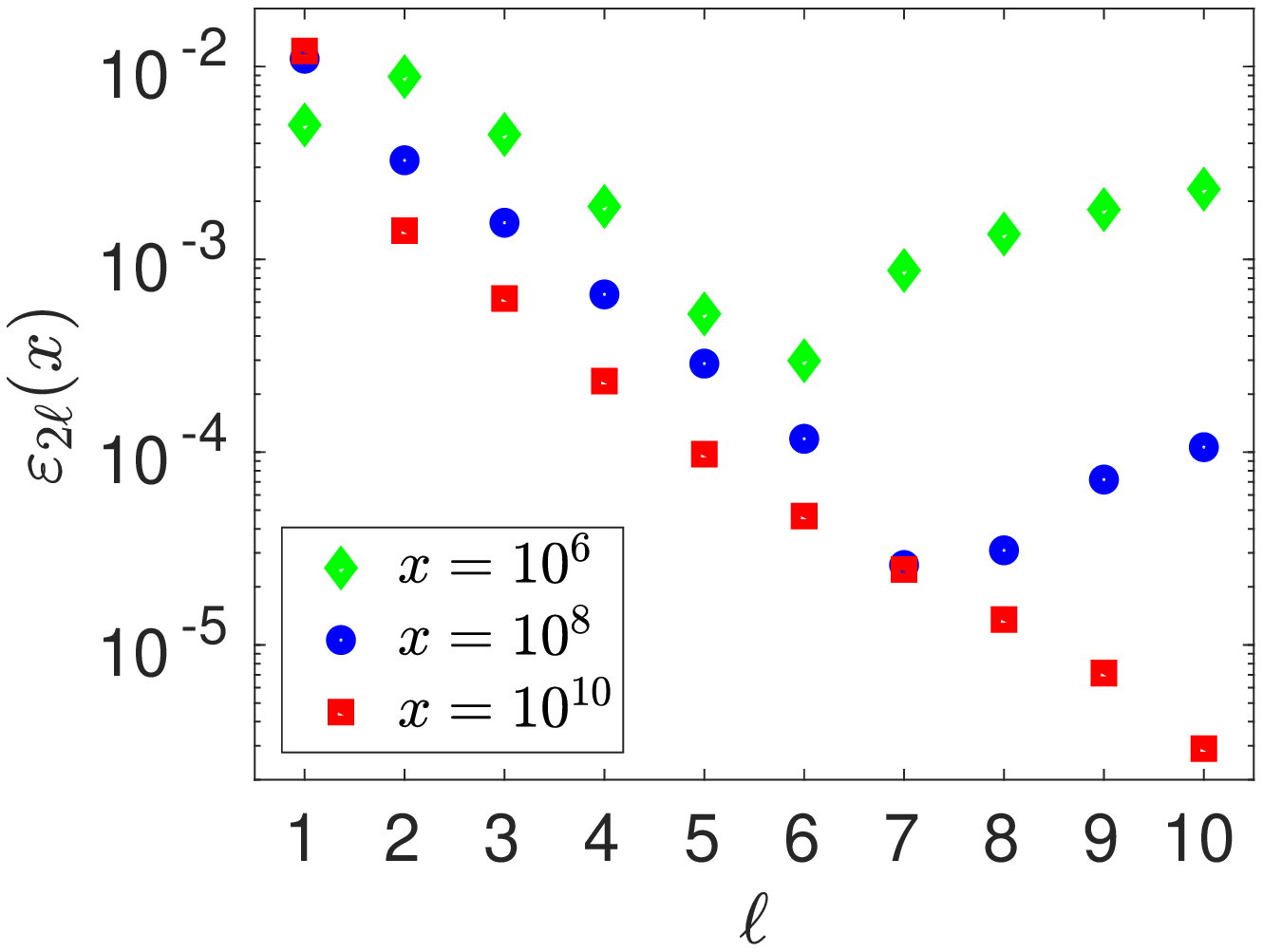}
\includegraphics[height=5.5cm]{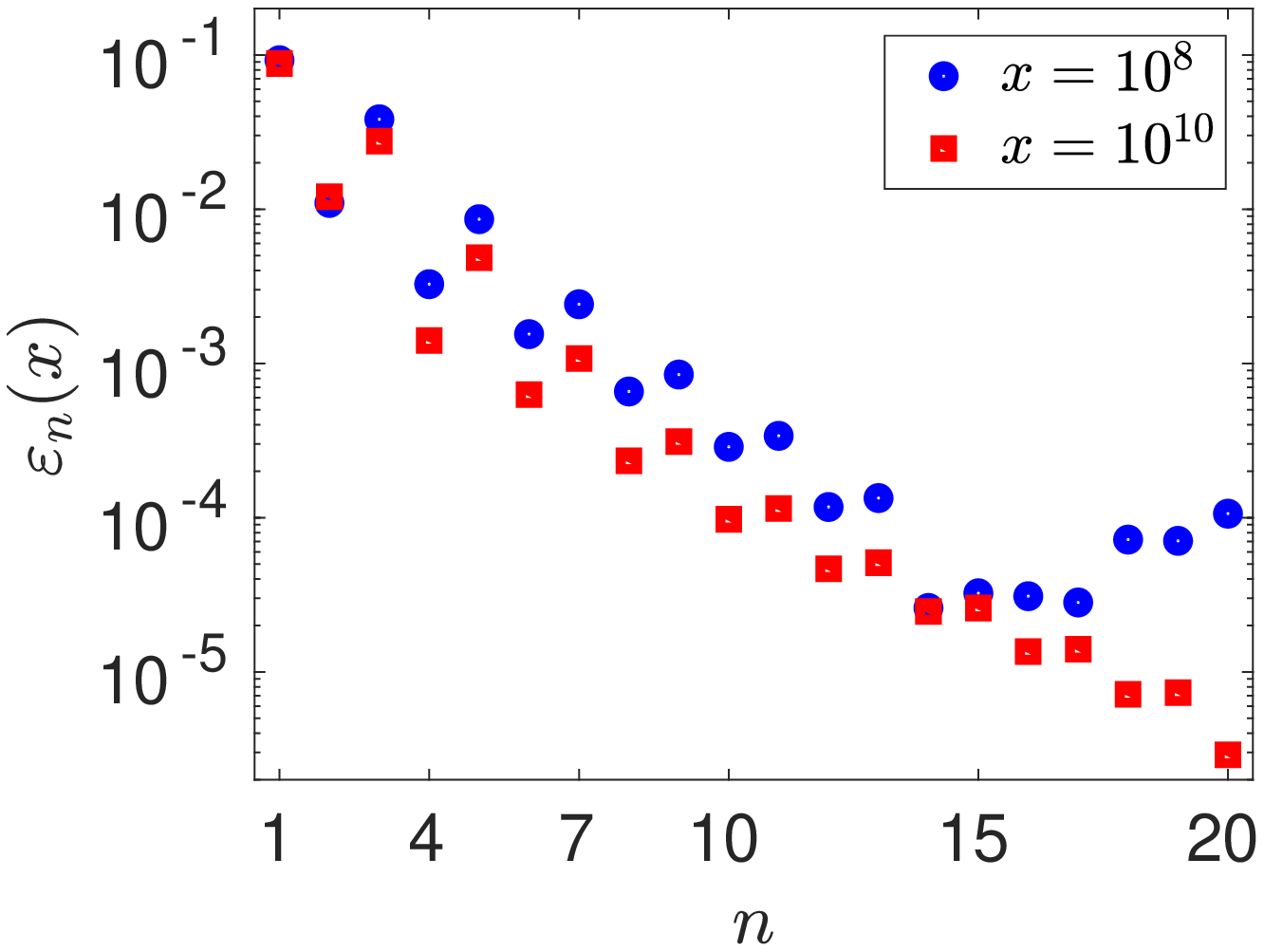}}
\caption{(a) 
{\it Relative errors calculated
by~$(\ref{epsevenll})$ for $x=10^6$, $x=10^8$ and
$x=10^{10}$.} \hfill \break
(b) {\it Relative errors calculated
by~$(\ref{defvarepsk})$ for $x=10^8$ and
$x=10^{10}$. The relative error of 
Landau's approximation~$(\ref{landauapprox})$ 
is plotted as $\varepsilon_1(x)$. The relative error
of the approximation given in 
Theorem~$\ref{theoremM}$ is plotted 
as $\varepsilon_2(x)$.}
}
\label{figerror}
\end{figure}

\noindent
In this section, we illustrate the accuracy of
the asymptotic series (\ref{aspi2}) by calculating 
its error terms at each order. Since (\ref{aspi2})
is a sum of two formal asymptotic series, we have
two ways to define its errors. First, we can
truncate both sums after the same number, $\ell$, 
of terms to define the relative error
\begin{equation}
\varepsilon_{2 \ell}(x)
=
\frac{1}{\pi_2(x)}
\left| 
\left(\sum_{n=1}^{\ell} (n-1)! \,
\frac{x \log(\log x)}{(\log x)^n} 
+ \sum_{n=1}^{\ell} C_{n-1} \frac{x}{(\log x)^n}
\right) 
- \pi_2(x)
\right|,
\quad
\mbox{for} \; \ell \in {\mathbb N}, \;\;
\label{epsevenll}
\end{equation}
which is plotted in Figure~\ref{figerror}(a) for $x=10^6$,
$x=10^8$ and $x=10^{10}$. Combining both sums in the
asymptotic expansion~(\ref{aspi2}) into one, we can 
write it as
$$
\pi_2(x)
\sim
\frac{x \log(\log x)}{\log x}
+ 
C_0 \, \frac{x}{\log x} 
+
\frac{x \log(\log x)}{(\log x)^2} 
+ 
C_1 \, \frac{x}{(\log x)^2}
+
\frac{2 \, x \log(\log x)}{(\log x)^3}
+
\dots,
$$
so we can define the $n$-th approximation, $a_n(x)$,
by
\begin{eqnarray}
a_1(x)
&=&
\frac{x \log(\log x)}{\log x} \, ,
\label{landauapprox}
\\
a_{2 \ell}(x)
&=&
\sum_{i=1}^{\ell} (i-1)! \,
\frac{x \log(\log x)}{(\log x)^i} 
+ \sum_{i=1}^{\ell} C_{i-1} \frac{x}{(\log x)^i} \, ,
\qquad \mbox{for} \; \ell = 1, \, 2, \, 3, \, \dots,
\label{evena2l}
\\
a_{2 \ell+1}(x)
&=&
\sum_{i=1}^{\ell + 1} (i-1)! \,
\frac{x \log(\log x)}{(\log x)^i} 
+ 
\sum_{i=1}^{\ell} C_{i-1} \frac{x}{(\log x)^i} \, ,
\qquad \mbox{for} \; \ell = 1, \, 2, \, 3, \, \dots,
\label{odd2lp1}
\end{eqnarray}
where approximation $a_1(x)$ is used 
in equation (\ref{landaupoi}) and 
$\alpha_2(x)$ is the approximation given in
equation~(\ref{pi_2_first_two_terms}).
With definitions (\ref{landauapprox})--(\ref{odd2lp1}), we can define the relative error by
\begin{equation}
\varepsilon_n(x)
=
\frac{|a_n(x) - \pi_2(x)|}{\pi_2(x)} \, .
\label{defvarepsk}
\end{equation}
This definition is consistent with~(\ref{epsevenll})
for even values of $n$. The results computed by
(\ref{defvarepsk}) are plotted in
Figure~\ref{figerror}(b) for $x=10^8$ and
$x=10^{10}$. 

In Figure~\ref{figerror}, we study the behaviour 
of the relative error~(\ref{defvarepsk}) for a fixed 
value of $x$. We observe that the relative error
$\varepsilon_n(x)$ is initially a decreasing 
sequence (for smaller values of~$n$). It reaches its
minimum and then it starts increasing again. Denoting
the value of $n$ where the relative error reaches its
minimum as $n_{\mathrm{min}}(x)$, we observe
that $n_{\mathrm{min}}(x)$ is an increasing 
function of $x$, at least for the three values
of $x$ considered in Figure~\ref{figerror}(a). 

Since $x$ is fixed in Figure~\ref{figerror}, 
the relative error~(\ref{defvarepsk})
is a constant multiple of the absolute error
$|a_n(x) - \pi_2(x)|$. In particular, the same conclusion
about $n_{\mathrm{min}}(x)$ could be reached when
considering the absolute errors instead of relative errors.
Since we plot results for two or three different (fixed)
values of $x$ in Figure~\ref{figerror}, we can also observe
that the relative error is (for many of the values
of $n$) a decreasing function of $x$, although this does
not look to be true for $n=2$ in
Figure~\ref{figerror}. 

To confirm this
observation, we plot the behaviour of the relative
error $\varepsilon_n(x)$ as a function of $x$ in 
Figure~\ref{figerrorasfunctionofx}. We use the first
eight values of $n$ and confirm that the relative
error $\varepsilon_n(x)$ resembles a decreasing 
function of $x$ for $n=1,$ $3,$ $4,$ \dots, $8,$ while
$\varepsilon_2(x)$ reaches its minimum between
$2 \times 10^5$ and $3 \times 10^5$. 
For smaller values of $x$, the approximation 
$a_2(x)$ overestimates $\pi_2(x)$, while it is 
an underestimate for larger values of $x$.
The graphs of $a_2(x)$ and $\pi_2(x)$ cross 
for the values of $x$ between 
$2 \times 10^5$ and $3 \times 10^5$, so in
this interval we can get very close to the 
correct answer, which results in the minimum 
of $\varepsilon_2(x)$ in
Figure~\ref{figerrorasfunctionofx}(a), because
our error definition~(\ref{defvarepsk}) includes
the absolute value. In Figure~\ref{figerrorasfunctionofx},
we also observe that the other errors are not
strictly decreasing, but they fluctuate with 
a decreasing trend, see, for example, the plot 
of $\varepsilon_8(x)$ in
Figure~\ref{figerrorasfunctionofx}(b).

\begin{figure}
\leftline{\hskip 2mm (a) \hskip 6.95cm (b)}
\centerline{{\hskip 4mm}
\includegraphics[height=5.5cm]{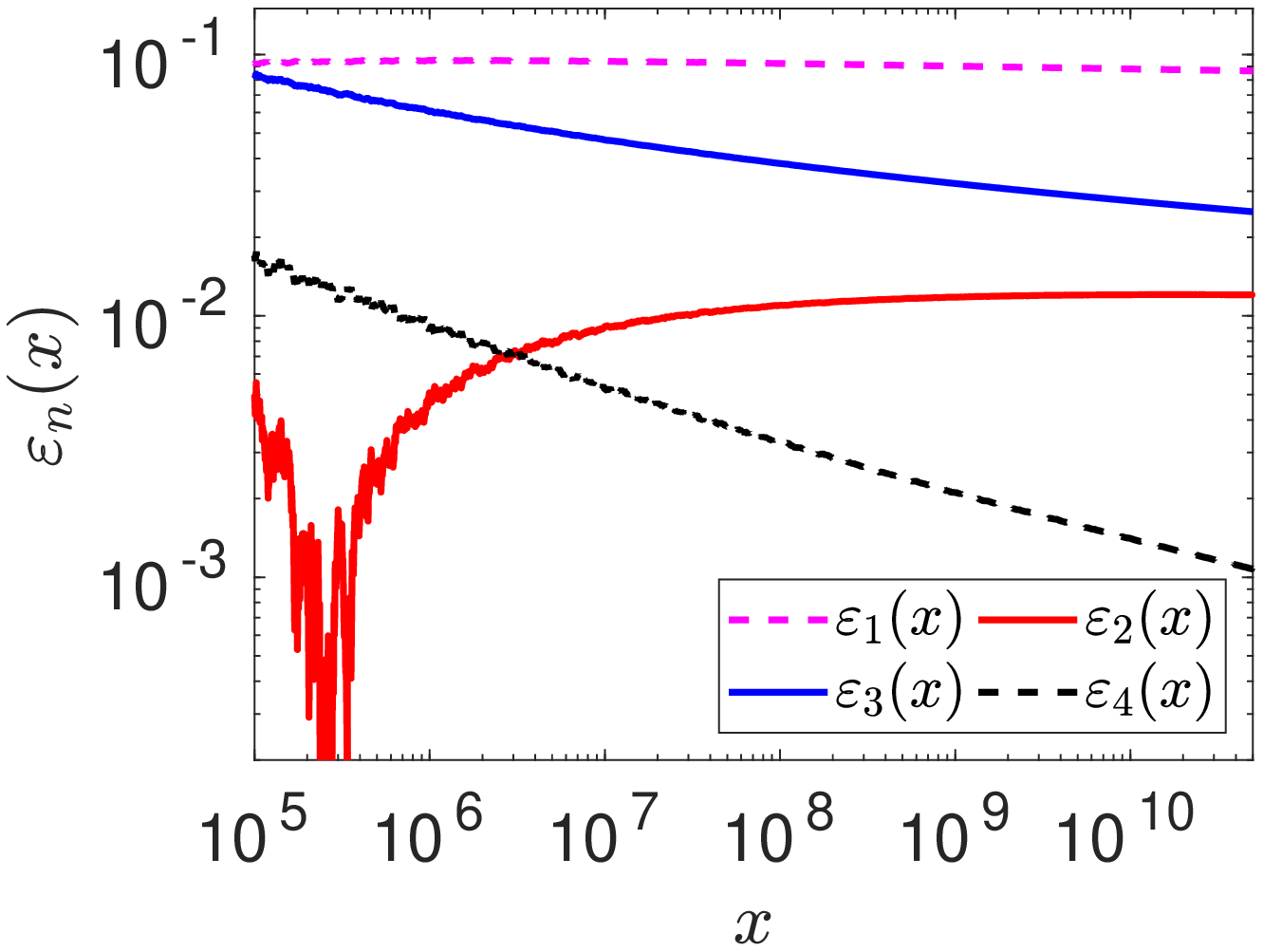}
\includegraphics[height=5.5cm]{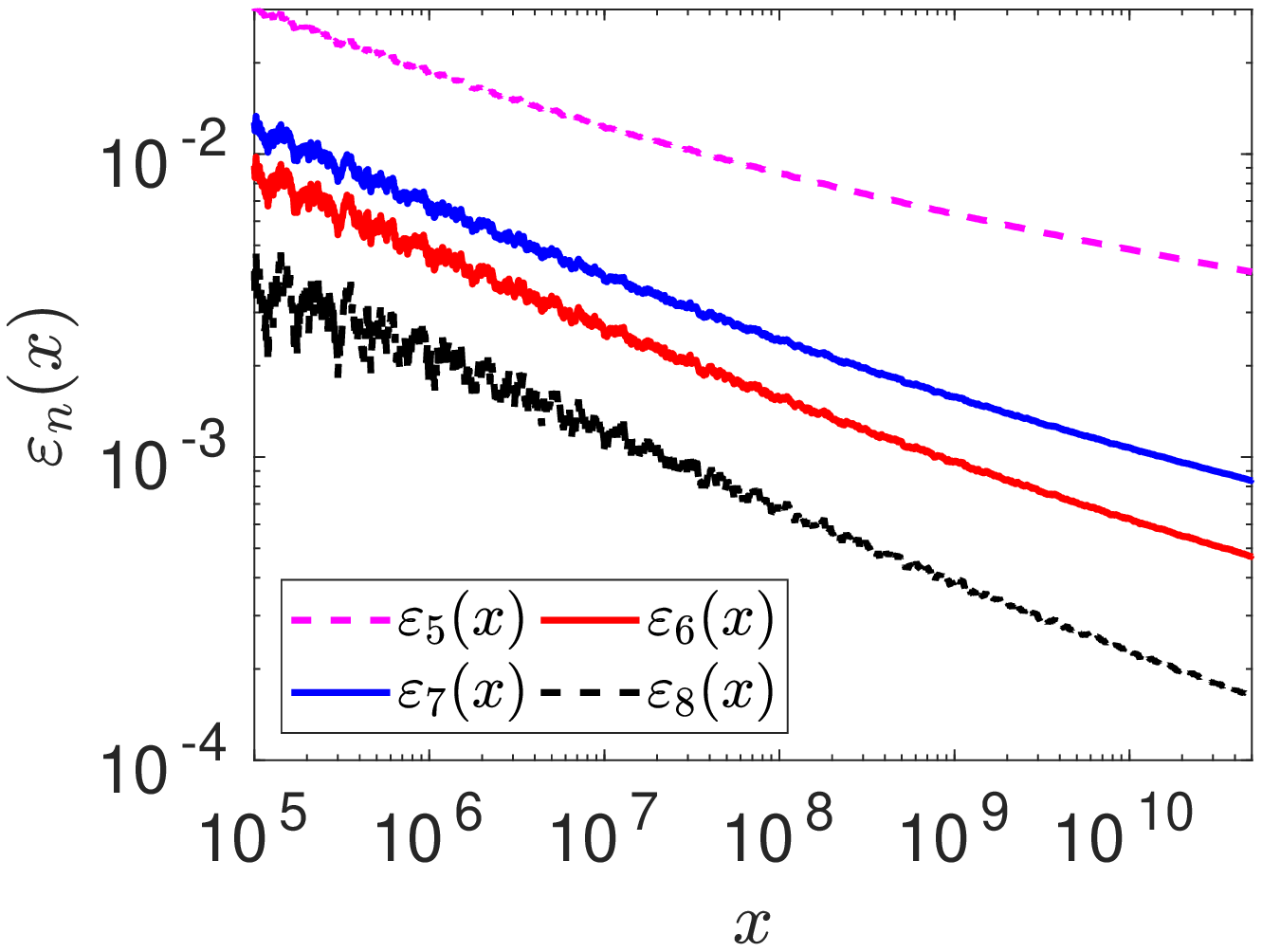}}
\caption{(a) 
{\it Relative errors $\varepsilon_n(x)$, for
$n=1$, $2$, $3$ and $4$, calculated
by~$(\ref{defvarepsk})$ as a function of $x$.
The relative error of Landau's
approximation~$(\ref{landauapprox})$ 
is plotted as $\varepsilon_1(x)$. The relative 
error of the approximation given in 
Theorem~$\ref{theoremM}$ is plotted 
as $\varepsilon_2(x)$.}
\hfill \break
(b) {\it Relative errors 
$\varepsilon_n(x)$, for
$n=5$, $6$, $7$ and $8$ as a function of $x$.
}
}
\label{figerrorasfunctionofx}
\end{figure}

\section{Discussion}

\label{secdiscussion}

\noindent In this paper, we have studied the behaviour of the semiprime
counting function $\pi_2(x),$ which is a special case ($k=2$) 
of the $k$-almost prime counting function $\pi_k(x)$. To generalize the 
presented results to the case $k \ge 3,$ we need to first generalize the
counting Lemma~\ref{counting_pi2}. Using the inclusion-exclusion
principle, it is possible to deduce the following counting formula
\begin{equation}
\label{count_form_pi_k}
\pi_k(x)
=\sum_{i=1}^{k} 
(-1)^{i-1}
\!\!\!
\sum_{p_1<p_2<\ldots<p_i\leq \sqrt[k]{x}} \!\!\!
\pi_{k-i} \! \left( \frac{x}{p_1 p_2 \ldots p_i} \right)\, ,
\end{equation}
where we define function $\pi_0(x)$ to be identically equal 
to 1, i.e. $\pi_0(x)=1$, and the sum over 
$p_1<p_2<\ldots<p_i\leq \sqrt[k]{x}$ means that we are 
summing $i$-times over all primes satisfying the given condition.
Substituting $k=2$ into equation~(\ref{count_form_pi_k}), we
obtain 
$$
\pi_2(x)
=
\sum_{p_1 \leq \sqrt{x}} \!\!\!
\pi_{1} \! \left( \frac{x}{p_1} \right)
\, - \!
\sum_{p_1<p_2\leq \sqrt{x}} \!\!\!
\pi_{0} \! \left( \frac{x}{p_1 p_2} \right).
$$
Using $\pi_0(x)=1$, we deduce equation~(\ref{countformpi2}). Thus,
equation~(\ref{count_form_pi_k}) provides a generalization of
equation~(\ref{countformpi2}), which expresses the $k$-almost
prime counting function $\pi_k(x)$ in terms of the counting functions
$\pi_1(x),$ $\pi_2(x)$, \dots, $\pi_{k-1}(x)$. It can be inductively
used to derive forms of coefficients of polynomials $P_{n,k}$ in 
the asymptotic series~(\ref{delange}). In addition to constants
$B_n$ and $C_n$, certain new constants will appear in such
calculations, including the (converging) sums of the form
$\sum_{p} (\log p)^i p^{-\ell}$ with $\ell\geq 2$ and 
$i \in {\mathbb N}$. For a detailed discussion of the 
asymptotic behaviour of these sums for $\ell = 1$, 
see Axler~\cite{Axler:2019:FFD}. Substituting $n=\pi(x)$ 
in~\cite[Theorem 5]{Axler:2019:FFD} gives a different 
expansion for the sums in~(\ref{landaulog}), which may be 
further examined using the prime number theorem.

There are, also, other possible approximations for $\pi_k(x)$. 
For example, Erd\H{o}s and S\'ark\H{o}zy~\cite{Erdos:1980:NPF} 
prove that
\begin{equation*}
\pi_k(x) 
<
\begin{cases}
\displaystyle c(\delta) \,
\frac{x}{\log x} 
\, \frac{(\log (\log x))^{k-1}}{(k-1)!} \,, 
&\text{for }1\leq k \leq (2-\delta)\log (\log x);
\\
\displaystyle
\rule{0pt}{5mm} c \, k^4 \, 2^{-k} \,
x \, \log x \, , &\text{for }k\geq 1;
\end{cases}
\end{equation*}
for some constants $c(\delta)$ and $c$. Other approximations, 
relating the function $\pi_k$ to some other products over primes 
are possible to obtain, as explained in \cite{Tenenbaum:2015:IAP}.

Functions $\pi_k(x)$ and $\Omega(x)$, used in 
expansion~(\ref{defOmegaLarge}), count the prime divisors 
with their multiplicity. Another possible generalization
is to investigate the related functions $N_k(x)$ and $\omega(x)$,
counting prime divisors without multiplicity. That is,
functions $N_k(x)$ and $\omega(x)$ are defined to be the number 
of natural numbers $n\leq x$ which have exactly $k$ distinct 
prime divisors and the number of distinct prime divisors of 
$x$, respectively. Finch~\cite[page 26]{Finch:2018:MC2} shows that 
\begin{equation*}
\frac{1}{x} \sum_{m \le x}
\omega(m)
\, \sim \,
\log (\log x)
+
0.2614972128\dots
+
\sum_{n=1}^\infty
\left(
- 1
+
\sum_{i=0}^{n-1}
\frac{\gamma_i}{i!}
\right)
\frac{(n-1)!}{\log^n x} \, ,
\end{equation*}
which has the higher order terms in the same form as in the
expansion~(\ref{defOmegaLarge}). Using the prime number theorem,
we also observe that
$
N_1(x) 
= 
\pi\left(x\right) + \pi\left(\sqrt{x}\right) 
+ 
\pi\left(\sqrt[3]{x}\right) + \ldots \sim \mbox{li}(x)
$ 
admits an identical asymptotic expansion as $\pi(x)$.
Delange~\cite[Theorem 1]{Delange:1971:FAS} and 
Tenenbaum \cite{Tenenbaum:2015:IAP} obtained the asymptotic
expansion of $N_k(x)$ in the form
\begin{equation*}
N_k(x) 
\, \sim \, 
\frac{x}{\log x}
\,
\sum_{n=0}^{\infty}
\frac{Q_{n,k} (\log (\log x))}{(\log x)^n} \, ,
\end{equation*}
where $Q_{n,k}$ are polynomials of degree $k-1$. Here, the expansion 
is similar to the expansion~(\ref{delange})
for $\pi_k(x)$, but the polynomials $P_{n,k}$ and $Q_{n,k}$
are different. Results about the leading terms of polynomials
$Q_{n,k}$ and about $Q_{0,k}$ have also been obtained, 
as in the case of $\pi_k$. Several different approximations 
for $N_k(x)$ are also possible to derive, as shown
in Tenenbaum~\cite{Tenenbaum:2015:IAP}, who points out 
that the function $N_k$ is easier to analyse than $\pi_k$, 
for larger values of $k$, relative to $\log (\log x)$. 
For example, the following holds uniformly for $x\geq 3$ and 
$(2+\delta)\log(\log x) \leq k \leq A \log(\log x) $:
$$
N_k(x) 
= 
C 
\, \frac{x \log x}{2^k} 
\left( 1 + 
\bigO
\! \left( (\log x)^{-\delta^2/5} \right) 
\right) \, ,
$$
where $A>0$, $0<\delta<1$ and $C \approx 0.378694$. 
Similar results, but for larger values of $k$, can 
be obtained for functions $\pi_k$ as 
well~\cite{Nicolas:1984:DNE}.

\begin{acknowledgements} \noindent
Authors would like to thank Julia Stadlmann for helpful comments during the
preparation of this manuscript.
\end{acknowledgements}

\affiliationone{%
   Drago\cb{s} Cri\cb{s}an\\
   Merton College\\ 
   Merton Street\\ 
   Oxford OX1 4JD\\
   United Kingdom
   \email{dragos.crisan@merton.ox.ac.uk}}
\affiliationtwo{%
   Radek Erban\\
   Mathematical Institute\\ 
   University of Oxford\\
   Woodstock Road\\
   Oxford OX2 6GG\\
   United Kingdom
   \email{erban@maths.ox.ac.uk}}
\end{document}